\documentclass[12pt]{amsart}

\usepackage[a4paper,margin=1in]{geometry}
\usepackage[T1]{fontenc}
\usepackage{amsmath,amssymb,amsthm,mathtools}
\usepackage{microtype}
\usepackage{xcolor}
\definecolor{linkblue}{RGB}{0,82,155}
\usepackage[
  colorlinks=true,
  linkcolor=linkblue,
  citecolor=linkblue,
  urlcolor=linkblue
]{hyperref}
\usepackage[nameinlink,capitalise,noabbrev]{cleveref}
\usepackage{enumitem}

\crefname{theorem}{Theorem}{theorems}
\Crefname{theorem}{Theorem}{Theorems}

\crefname{lemma}{lemma}{lemmas}
\Crefname{lemma}{Lemma}{Lemmas}

\crefname{corollary}{corollary}{corollaries}
\Crefname{corollary}{Corollary}{Corollaries}

\crefname{proposition}{proposition}{propositions}
\Crefname{proposition}{Proposition}{Propositions}

\crefname{remark}{remark}{remarks}
\Crefname{remark}{Remark}{Remarks}

\crefname{definition}{definition}{definitions}
\Crefname{definition}{Definition}{Definitions}

\selectfont
\setlist{itemsep=0.25em,topsep=0.4em}

\usepackage{aliascnt}

\newtheorem{theorem}{Theorem}[section]

\newaliascnt{lemma}{theorem}
\newtheorem{lemma}[lemma]{Lemma}
\aliascntresetthe{lemma}

\newaliascnt{corollary}{theorem}

\aliascntresetthe{corollary}

\newaliascnt{proposition}{theorem}

\aliascntresetthe{proposition}

\newtheoremstyle{myremark}%
  {6pt}
  {6pt}
  {\normalfont}
  {}
  {\bfseries}
  {.}
  {.5em}
  {}

\theoremstyle{myremark}
\newaliascnt{remark}{theorem}
\newtheorem{remark}[remark]{Remark}
\aliascntresetthe{remark}

\newaliascnt{definition}{theorem}
\newtheorem{definition}[definition]{Definition}
\aliascntresetthe{definition}

\DeclareMathOperator{\tr}{tr}

\newcommand{\R}{\mathbb{R}}
\newcommand{\1}{\mathbf{1}}
\newcommand{\inner}[2]{\langle #1,#2\rangle}
\newcommand{\froinner}[2]{\langle #1,#2\rangle_{\!F}}
\newcommand{\norm}[1]{\lVert #1\rVert}
\newcommand{\abs}[1]{\lvert #1\rvert}
\newcommand{\cP}{\mathcal{P}}

\title{Upper bound on the $k$-th eigenvalue of a graph}

\author{Varun Sivashankar}
\thanks{\textsc{Princeton University. Email:} \href{mailto:varunsiva@princeton.edu}{\texttt{varunsiva@princeton.edu}}.}
\date{}

\begin{document}
\begin{abstract}
We prove a general upper bound on the $k$-th adjacency eigenvalue of a graph. For $k\ge 2$, we show that
\[
\lambda_k(G)\le \frac{(k-2)\sqrt{k+1}+2}{2k(k-1)}\,n-1
\]
for every graph $G$ on $n$ vertices. We build on a recent approach that addresses the case $k=3$ and generalize the upper bound for all $k \geq 3$ by using the positivity of Gegenbauer polynomials. The upper bound is tight for $k \in \{2,3,4,8,24\}$. We also highlight the close relation of $\lambda_k(G)$ to questions about equiangular lines.
\end{abstract}

\maketitle

\section{Introduction}

For a simple undirected graph $G$ on $n$ vertices, let
\[
\lambda_1(G)\ge \lambda_2(G)\ge \cdots\ge \lambda_n(G)
\]
denote the eigenvalues of its adjacency matrix. Following Nikiforov~\cite{Nikiforov15}, define
\[
 c_k:=\sup\left\{\frac{\lambda_k(G)}{|V(G)|}: |V(G)|\ge k\right\}.
\]
The problem of bounding $\lambda_k(G)$ in terms of the order of the graph goes back at least to work of Hong and Powers~\cite{Hong88,Hong93,Powers89}. Nikiforov~\cite{Nikiforov15} proved the universal estimate
\[
 c_k\le \frac{1}{2\sqrt{k-1}}\qquad (k\ge 2),
\]
On the lower-bound side, the balanced complete $k$-partite graph gives $c_k\ge 1/k$, and Nikiforov showed that in fact $c_k>1/k$ for every $k\ge 5$~\cite{Nikiforov15}. He therefore asked whether $c_3=1/3$ and $c_4=1/4$~\cite[Question~2.11]{Nikiforov15}. Powers had earlier proposed a conjecture that $\lambda_k(G) \leq \lfloor \frac{n}{k} \rfloor$ for connected graphs, but Nikiforov later observed that the original proof is flawed; moreover, Nikiforov's constructions show that this bound fails for every $k\ge 5$, while Linz later ruled out the case $k=4$~\cite{Powers89,Nikiforov15,Linz23}. The remaining case $k=3$ is also recorded in the survey of Liu and Ning~\cite[Problem~10]{LiuNing23}.

Recent work has clarified the first unresolved cases. Leonida and Li verified the sharp third-eigenvalue bound for several important graph classes and formulated a weighted matrix conjecture that would imply it in full generality~\cite{LeonidaLi25}. See also Li~\cite{Li25} for a related asymptotic improvement on Nikiforov's bound. Very recently, Tang proved the sharp inequality
$\lambda_3(G)\le \frac n3-1$ for every graph $G$ by confirming the conjecture of Leonida and Li~\cite{Tang26}.

On the lower-bound side, it is easy to see that $c_3 \geq 1/3$ by taking the disjoint union of three equal sized cliques. \cite{Nikiforov15} proved that $c_k > \frac{1}{2\sqrt{k-1}+1}$ when $q$ is an odd prime power and $k = q^2 - q + 1$. \cite{Nikiforov15} also showed that $c_k > \frac{1}{2\sqrt{k-1}+k^{1/3}}$ for sufficiently large $k$. Further, Linz provided several improved constructions for $4 \leq k \leq 24$~\cite{Linz23}.

Our main result is the following.

\begin{theorem}\label{thm:main-general}
Let $k\ge 2$ be an integer. For every graph $G$ on $n$ vertices,
\[
\lambda_k(G)\le \alpha_k n-1,
\qquad
\alpha_k:= \frac{(k-2)\sqrt{k+1}+2}{2k(k-1)}
\]
Consequently, $c_k \leq \alpha_k$
\end{theorem}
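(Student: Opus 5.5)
The plan is to pass from $A$ to $B:=A+I$, whose entries lie in $\{0,1\}$ with ones on the diagonal, and to show $\lambda_k(B)\le \alpha_k n$; this immediately gives $\lambda_k(G)\le \alpha_k n-1$. Let $u_1,\dots,u_k$ be orthonormal eigenvectors of $B$ for its top $k$ eigenvalues, and let $P=\sum_{i\le k}u_iu_i^{T}$ be the orthogonal projection onto their span. Then
\[
k\,\lambda_k(B)\le \sum_{i\le k}\lambda_i(B)=\tr(PB).
\]
Write $P=XX^{T}$, where $X$ is the $n\times k$ matrix with rows $x_v\in\R^k$. Then $\sum_v x_vx_v^{T}=I_k$, $\sum_v\norm{x_v}^2=k$, and
\[
\tr(PB)=\sum_{u,v}B_{uv}\inner{x_u}{x_v}\le \sum_{u,v}\max\{\inner{x_u}{x_v},0\},
\]
since $B_{uv}\in\{0,1\}$. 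So the graph disappears, and it suffices to prove the purely geometric inequality
\[
\sum_{u,v}\inner{x_u}{x_v}_+\le k\,\alpha_k\, n
\]
for any $n$ vectors in $\R^k$ forming a tight frame with $\sum_v x_vx_v^{T}=I_k$.

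To prove this, I would write $x_v=r_vy_v$ with $y_v$ a unit vector and set $t_{uv}=\inner{y_u}{y_v}\in[-1,1]$. The key step is a polynomial majorant of $t_+$ on $[-1,1]$ of the form
\[
t_+\le a+bt-c\,G_2(t),\qquad c\ge 0,
\]
where $G_2(t)=kt^2-1$ is, up to normalization, the degree-2 Gegenbauer polynomial for $S^{k-1}$. Since Gegenbauer kernels are positive semidefinite, $\sum_{u,v}r_ur_vG_2(t_{uv})\ge 0$, so that term can be dropped. Multiplying by $r_ur_v$ and summing gives
\[
\sum_{u,v}\inner{x_u}{x_v}_+\le a\Bigl(\sum_v r_v\Bigr)^2+b\,\Bigl\|\sum_v x_v\Bigr\|^2.
\]
By Cauchy--Schwarz, $(\sum_v r_v)^2\le n\sum_v r_v^2=nk$. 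Also $\|\sum_v x_v\|^2=\1^{T}P\1\le n$, because $P$ is a projection. Hence the sum is at most $(ak+b)n$, and we need $a,b\ge0$ with $ak+b=k\alpha_k$.

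The remaining work, which I expect to be the main obstacle, is optimizing the quadratic majorant $q(t)=a+bt-c(kt^2-1)$. It must satisfy $q\ge t_+$ on $[-1,1]$ and minimize $ak+b$. The natural ansatz is a quadratic touching $t_+$ at $t=1$ (since $q(1)=1$ is forced by diagonal terms), tangent to the line $y=t$ at some interior point $t_0>0$, and tangent to or touching $0$ at $t_1=-t_0$ or at the endpoint. The equality structure suggests $t_0=1/\sqrt{k+1}$, the equiangular angle: equality should hold exactly when the $y_v$ form an equiangular configuration with angles $\pm1/\sqrt{k+1}$. This matches the tight cases $k\in\{2,3,4,8,24\}$ and $k+1$ lines in $\R^k$ \ldots\ related to $E_8$ and Leech, i.e.\ the claimed extremal equiangular systems.

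I would solve the resulting small linear system for $a,b,c$ and check that $c\ge0$ and that $q-t_+\ge0$ on both $[-1,0]$ and $[0,1]$. Each check is a nonnegativity statement about a quadratic, so it reduces to a discriminant or endpoint computation. Finally I would verify that $ak+b$ simplifies to $\frac{(k-2)\sqrt{k+1}+2}{2(k-1)}$, which equals $k\alpha_k$. If the degree-2 majorant turns out to be too weak, the fallback is to add a degree-3 or degree-4 Gegenbauer term with nonnegative coefficient, exactly as above, at the cost of a messier verification.
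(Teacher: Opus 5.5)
There is a genuine gap: the ``purely geometric inequality'' you reduce to is false, so no majorant of any degree (including your degree-3 or degree-4 fallback) can prove it. Given any graph $G$, let $P=XX^\top$ be the spectral projection of $B=A(G)+I$ onto its top $k$ eigenvectors. Since $B_{uv}\in\{0,1\}$, we have $\sum_{u,v}\inner{x_u}{x_v}_+\ge\tr(PB)=\sum_{i\le k}\lambda_i(B)$. Your inequality would therefore give $\lambda_1(G)+\cdots+\lambda_k(G)\le k\alpha_k n-k$ for every graph. That is a bound on the \emph{sum} of the top $k$ eigenvalues, and it fails. For $k=3$, the cycle $C_6$ has spectrum $2,1,1,-1,-1,-2$, so $\sum_{i\le 3}\lambda_i(B)=3+2+2=7>6=3\alpha_3 n$. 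Explicitly, the rows of $P_{uv}=\frac16\bigl(1+2\cos\frac{\pi(u-v)}{3}\bigr)$ form a tight frame in $\R^3$ with $\sum_{u,v}\inner{x_u}{x_v}_+=7$. For $k=2$ it would give $\lambda_1+\lambda_2\le n-2$, which is already violated by $K_4$ minus an edge, where $\lambda_1+\lambda_2=\frac{1+\sqrt{17}}{2}>2$.

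The failure is structural. The extremal graphs of \Cref{disc:lowerbounds} ($C_6$ for $k=3$, the icosahedron for $k=4$) are $(n/2-1)$-regular with $\lambda_2=\cdots=\lambda_k=\alpha_k n-1$. So your averaging step $k\lambda_k(B)\le\sum_{i\le k}\lambda_i(B)$ loses exactly $(\frac12-\alpha_k)n$ at the very examples where the theorem is tight. In your estimate the loss sits in $b\,\1^\top P\1\le bn$: the Perron direction can lie in the range of $P$, so this term is genuinely of order $n$.

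The missing idea is to avoid the top of the spectrum of $G$ altogether. The paper uses $A(G)+A(\overline G)=J-I$ and Weyl's inequality to get $\lambda_k(G)\le -1-\lambda_{n-k+2}(\overline G)$. It then bounds from below the sum of the $r=k-1$ \emph{smallest} eigenvalues of $A(\overline G)$. For a rank-$r$ projection $Q$, $\tr(A(\overline G)Q)\ge 2\sum_{i<j}\min(q_{ij},0)\ge-\|Q\|_1/2$. Here the all-ones term enters with the favorable sign ($\1^\top Q\1\ge 0$), and averaging over the bottom $k-1$ eigenvalues is lossless on the extremal graphs. This also fixes your dimension count: the relevant frame lives in $\R^{k-1}$, which is why the angle $1/\sqrt{k+1}=1/\sqrt{r+2}$ of extremal equiangular lines in $\R^{k-1}$ appears; your ansatz puts it in $\R^k$.

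Finally, even inside a correct framework your degree-2 device would not help. Subtracting $c(kt^2-1)$ with $c\ge 0$ only makes the majorant concave, so the optimum of your scheme is $c=0$, $q(t)=\frac{1+t}{2}$, giving just $\lambda_k(B)\le\frac{k+1}{2k}n$. The paper instead keeps the degree-2 kernel with a \emph{positive} coefficient and controls it jointly with $C^2$ via the frame inequality $C^2+rX^2\le rn$ (\Cref{lem:CS-ineq}). Only the degree-4 kernel $f_4^r$ is subtracted, and the majorant of $|t|$ touches at $t=1$ and is tangent at $t=1/\sqrt{r+2}$ (\Cref{lem:scalar-majorant}).
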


For $k=2$, this simply follows from the classical sharp bound $\lambda_2(G)\le n/2-1$~\cite{Hong88,Nikiforov15}. Our result holds for $k \geq 3$. For $k=3$, it recovers Tang's sharp bound $\lambda_3(G)\le n/3-1$~\cite{Tang26}. For $k \in \{4,8,24\}$, $\alpha_k$ matches Linz's lower bound~\cite{Linz23}. Therefore, our upper bound is tight for $k \in \{2,3,4,8,24\}$. It is very interesting to note that our upper bound for $\lambda_k(G)$ is tight whenever an extremal equiangular lines configuration exists in dimension $r=k-1$. We discuss this in further detail in \Cref{disc:lowerbounds}.

For every $k\ge 2$ one has
$\alpha_k<\frac{1}{2\sqrt{k-1}}$, so \Cref{thm:main-general} improves Nikiforov's universal bound for $\lambda_k(G)$. We remark that Nikiforov claimed that there exists a positive $\epsilon_k$ such that $c_k \leq \frac{1}{2\sqrt{k-1}}-\epsilon_k$. However, the proof was omitted because it relied on the Removal Lemma of Alon,
Fischer, Krivelevich, and Szegedy \cite{alon2000efficient} with other tools of analytic graph theory. This would make the proof complicated and give a small $\epsilon_k$.

Our proof follows the overall strategy of Tang~\cite{Tang26}. As in their work, the graph-theoretic theorem is deduced from a weighted matrix inequality using the identity
\[
A(G)+A(\overline G)=J-I
\]
and Weyl's inequality. The key new modification is the use of Gegenbauer polynomials to generalize the trigonometric upper bound from the rank-$2$ case. The required positive-kernel input goes back to Schoenberg~\cite{Schoenberg42}; for related perspectives in spherical coding and semidefinite methods, see also Delsarte--Goethals--Seidel~\cite{DGS77} and Bachoc--Vallentin~\cite{BachocVallentin08}.

\section{Positive Kernels}

Fix an integer $r\ge 2$. Let $S^{r-1} = \{v \in \R^r: \|v\|_2 = 1\}$ be the unit sphere. For real vectors $u,v$, let $\inner{u}{v}$ denote the standard inner product. For real valued square matrices $A,B \in \R^{n\times n}$, write $\inner{A}{B}_F = \tr(A^T B)$ for the Frobenius inner product. This is equivalent to just taking taking the inner product by viewing $A$ and $B$ as $n^2$-dimensional vectors. For a matrix, $\|A\|_F = \sqrt{\froinner{A}{A}}$ and $\|A\|_1 = \sum_{i,j} |A_{ij}|$. Let $I_r \in \R^{r\times r}$ denote the identity matrix. A matrix $M \in \R^{n\times n}$ is positive semidefinite if $x^\top M x \geq 0$ for all $x \in \R^n$.

\begin{definition}
A function $f: \R \rightarrow \R$ is positive semidefinite on $S^{r-1}$ if for all unit vectors $u_1,\ldots,u_n \in S^{r-1}$, the matrix $M \in \R^{n\times n}$ given by $M_{ij} = f(\inner{u_i}{u_j})$ is positive semidefinite.
\end{definition}

Define 
\[f^r_2(t) = t^2 - \frac{1}{r}\]
\[f^r_4(t) = t^4 - \frac{3}{r(r+2)}\]

We will prove that $f_2^r$ and $f_4^r$ as positive semidefinite on $S^{r-1}$. In \cref{sec:l1}, this will play a crucial role in bounding $\|Q\|_1$ for a rank $r$ orthogonal projection matrix $Q \in \R^{n\times n}$.

\begin{lemma}\label{lem:psd2}
For $r \geq 2$, define
\[
\phi(u):=uu^\top-\frac1r I_r,
\qquad u\in S^{r-1}.
\]
Then for all $u,v\in S^{r-1}$,
\[
\froinner{\phi(u)}{\phi(v)}=f_2^r(\inner{u}{v}).
\]
Further, $f_2^r$ is positive semidefinite on $S^{r-1}$.
\end{lemma}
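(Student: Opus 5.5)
The plan is a direct computation of the Frobenius inner product, followed by the standard Gram-matrix argument for positive semidefiniteness.

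First, expand $\froinner{\phi(u)}{\phi(v)}$ by bilinearity:
\[
\froinner{uu^\top}{vv^\top}-\frac1r\froinner{uu^\top}{I_r}-\frac1r\froinner{I_r}{vv^\top}+\frac1{r^2}\froinner{I_r}{I_r}.
\]
Each term is elementary:
\begin{itemize}
\item $\froinner{uu^\top}{vv^\top}=\tr(uu^\top vv^\top)=\inner{u}{v}^2$;
\item $\froinner{uu^\top}{I_r}=\tr(uu^\top)=\norm{u}^2=1$, and likewise $\froinner{I_r}{vv^\top}=1$, since $u,v\in S^{r-1}$;
\item $\froinner{I_r}{I_r}=r$.
\end{itemize}
Summing gives $\inner{u}{v}^2-\frac2r+\frac1r=\inner{u}{v}^2-\frac1r=f_2^r(\inner{u}{v})$, which is the first claim.

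For positive semidefiniteness, take unit vectors $u_1,\dots,u_n\in S^{r-1}$ and set $M_{ij}=f_2^r(\inner{u_i}{u_j})$. By the identity just proved, $M_{ij}=\froinner{\phi(u_i)}{\phi(u_j)}$, so $M$ is the Gram matrix of the vectors $\phi(u_1),\dots,\phi(u_n)$ in the inner product space $(\R^{r\times r},\froinner{\cdot}{\cdot})$. Hence for any $x\in\R^n$,
\[
x^\top Mx=\Bigl\|\sum_i x_i\phi(u_i)\Bigr\|_F^2\ge 0 .
\]
Therefore $f_2^r$ is positive semidefinite on $S^{r-1}$.

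No step here is a genuine obstacle. The only points needing care are using the unit-norm hypothesis to evaluate the cross terms as $1$, and noting that the Frobenius inner product is a genuine inner product, which is what makes Gram matrices PSD.
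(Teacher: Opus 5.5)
Your proposal is correct and follows the paper's proof essentially verbatim: a direct trace computation giving $\inner{u}{v}^2-\frac1r$ (you simply spell out the four bilinear terms and use $\norm{u}=\norm{v}=1$ for the cross terms), followed by recognizing $\bigl(f_2^r(\inner{u_i}{u_j})\bigr)_{i,j}$ as the Gram matrix of $\phi(u_1),\dots,\phi(u_n)$ under the Frobenius inner product. Your explicit identity $x^\top Mx=\bigl\|\sum_i x_i\phi(u_i)\bigr\|_F^2$ just makes the paper's one-line Gram-matrix appeal self-contained.
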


\begin{proof}
A direct computation gives
\begin{align*}
\froinner{\phi(u)}{\phi(v)}
&=\tr\!\left(\left(uu^\top-\frac1r I_r\right)\left(vv^\top-\frac1r I_r\right)\right) \\
&=(u^\top v)^2-\frac1r
 =\inner{u}{v}^2-\frac1r
 =f_2^r(\inner{u}{v}).
\end{align*}

Now observe that for any $u_1,\ldots,u_n \in S^{r-1}$, the $n \times n$ matrix $(f_2^r(\inner{u_i}{u_j}))_{i,j=1}^n$ is just the Gram matrix of the matrices $\phi(u_1),\ldots,\phi(u_n)$ with the standard Frobenius inner product. Therefore, it must be positive semidefinite. 
\end{proof}

To prove that $(f_2^r(\inner{u_i}{u_j}))_{i,j=1}^n$ is PSD, we expressed it as a Gram matrix of certain matrices with the Frobenius inner product. We will do the same thing for $f_4^r$, but since $f_4^r$ is a degree $4$ polynomial, we will need to work with $4$-tensors. The proof of \Cref{lem:psd4} is elementary but requires some computation. We defer this proof to the appendix.

\begin{lemma}\label{lem:psd4}
For $r \geq 2$,
\[
f_4^r(t)=t^4-\frac{3}{r(r+2)}
\]
is positive semidefinite on \(S^{r-1}\).
\end{lemma}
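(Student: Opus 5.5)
Mirroring the proof of \Cref{lem:psd2}, I will write $f_4^r(\inner{u}{v})$ as a Frobenius-type inner product of $4$-tensors. Define, for $u\in S^{r-1}$,
\[
\psi(u):=u^{\otimes 4}-T,\qquad T:=\frac{1}{r(r+2)}\sum_{\sigma}\ \mathrm{Sym}(I_r\otimes I_r),
\]
where $T$ is the symmetric $4$-tensor $T_{abcd}=\frac{1}{r(r+2)}(\delta_{ab}\delta_{cd}+\delta_{ac}\delta_{bd}+\delta_{ad}\delta_{bc})$. In words, $T$ is the average of $w^{\otimes 4}$ over the uniform measure on $S^{r-1}$, by the standard moment formula $\mathbb{E}[w_aw_bw_cw_d]=\frac{\delta_{ab}\delta_{cd}+\delta_{ac}\delta_{bd}+\delta_{ad}\delta_{bc}}{r(r+2)}$. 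Rather than invoking this formula, I will simply define $T$ explicitly by the displayed entries and compute directly.

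The computation has three pieces:
\[
\inner{u^{\otimes4}}{v^{\otimes4}}=\inner{u}{v}^4,\qquad
\inner{u^{\otimes 4}}{T}=\frac{3\norm{u}^4}{r(r+2)}=\frac{3}{r(r+2)},
\]
the latter because each of the three delta products contracts to $\norm{u}^4=1$. For the last piece,
\[
\inner{T}{T}=\frac{1}{r^2(r+2)^2}\sum_{\text{pairings }P,P'}\inner{\delta_P}{\delta_{P'}},
\]
where equal pairings contribute $r^2$ and distinct pairings contribute $r$. This gives $\inner{T}{T}=\frac{3r^2+6r}{r^2(r+2)^2}=\frac{3}{r(r+2)}$. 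Hence
\[
\inner{\psi(u)}{\psi(v)}=\inner{u}{v}^4-\frac{3}{r(r+2)}-\frac{3}{r(r+2)}+\frac{3}{r(r+2)}=f_4^r(\inner{u}{v}).
\]

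Given this identity, for any $u_1,\dots,u_n\in S^{r-1}$ the matrix $(f_4^r(\inner{u_i}{u_j}))_{i,j}$ is the Gram matrix of $\psi(u_1),\dots,\psi(u_n)$ in $(\R^r)^{\otimes4}\cong\R^{r^4}$ with the entrywise inner product. It is therefore positive semidefinite, as in \Cref{lem:psd2}.

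The only step needing care is the contraction count in $\inner{T}{T}$. There are $3$ pairings of four indices, giving $9$ ordered pairs $(P,P')$. Identical pairings have two independent index cycles and give $r^2$; distinct pairings merge into a single cycle and give $r$. This yields $3r^2+6r$, and I would verify it by small cases such as $r=1$, where the total must be $9$ (indeed $3+6=9$).
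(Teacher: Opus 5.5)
Your proposal is correct and is essentially the paper's own proof: your tensor $T$ is exactly the paper's $\Omega$, and you compute the same three contractions ($\inner{u}{v}^4$, $\frac{3}{r(r+2)}$, and $\frac{3r^2+6r}{r^2(r+2)^2}=\frac{3}{r(r+2)}$ via equal versus distinct pairings) before concluding with the same Gram-matrix argument. The only blemish is your first, symbolic definition of $T$ (the sum over $\sigma$ of a symmetrization), which is garbled as written; drop it and keep the explicit entrywise definition, which is all the proof uses.
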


\begin{remark}
\Cref{lem:psd2} and \Cref{lem:psd4} are just special cases of a theorem by Schoenberg\cite{Schoenberg42}. Let $G_\ell^{\lambda}$ denote the Gegenbauer polynomial of degree $\ell$. These polynomials can be defined in terms of their generating function\cite{stein1971introduction}: 
\[\frac{1}{(1 - 2tx + x^2)^\lambda} = \sum_{\ell=0}^\infty G_{\ell}^\lambda (t) x^{\ell}\]
Schoenberg proved the following strong characterization: $f(t)$ is a real continuous function such that the matrix $f(\inner{u_i}{u_j})_{i,j=1}^n$ is positive semidefinite for all subsets $\{u_1,\ldots,u_n\}\subseteq S^{r-1}$ if and only if $f$ is of the form $\sum_{\ell=0}^\infty a_{\ell} G_{\ell}^{r/2-1}$ with $a_{\ell} \geq 0$.

In particular, $G_2^{r/2-1}$ and $G_4^{r/2-1}$ are positive semidefinite on $S^{r-1}$, and we are simply using $f_2^r = \frac{r-1}{r} G_2^{r/2-1}$ and $f_4^r = \frac{(r-1)(r+1)}{(r+2)(r+4)} G_4^{r/2-1} + \frac{6}{r+4} f_2^r$. Since $f_r^4$ and $f_r^4$ are just positive linear combinations of Gegenbauer polynomials with $\lambda = r/2-1$, they must be positive semidefinite on $S^{r-1}$ by \cite{Schoenberg42}. However, we do not require this strong characterization.  
\end{remark}

\section{An $\ell_1$ bound for rank-$r$ orthogonal projections}\label{sec:l1}

Let $Q=(q_{ij})\in \R^{n\times n}$ be a rank-$r$ orthogonal projection, where $r\ge 2$. So $Q$ satisfies $Q = Q^2 = Q^T$. The goal of this section is to determine an upper bound on $\|Q\|_1 = \sum_{i,j} |q_{ij}|$. It is easy to see that for a rank $r$ orthogonal projection $Q$, $\|Q\|_1 \leq \sqrt{n^2}\|Q\|_F = n \sqrt{r}$. In \cite{MagsinoMixonParshall2019}, they prove that $\|Q\|_1 \leq r + \sqrt{(n-1)r(n-r)}$. Our bound in \Cref{thm:l1-proj} is better precisely when $n \geq \binom{r+1}{2}$ with equality at $n = \binom{r+1}{2}$. We require this improved bound because $r$ will be small relative to $n$ in our applications. We prove the following:

\begin{theorem}\label{thm:l1-proj}
Let \(Q\in \R^{n\times n}\) be a rank-\(r\) orthogonal projection, where \(r\ge 2\). 
\[
\|Q\|_1=\sum_{i,j=1}^n \abs{q_{ij}}
\le \beta_r n \qquad \text{ where } \quad \beta_r:=\frac{r+\sqrt{r+2}}{1+\sqrt{r+2}}
\]
\end{theorem}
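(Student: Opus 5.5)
The plan is to realize $Q$ as a Gram matrix and reduce \Cref{thm:l1-proj} to a pointwise polynomial inequality plus the positivity statements \Cref{lem:psd2} and \Cref{lem:psd4}. Write $Q=VV^\top$ with $V\in\R^{n\times r}$ having orthonormal columns, and let $v_i\in\R^r$ be the rows. Then $q_{ij}=\inner{v_i}{v_j}$ and $d_i:=q_{ii}=\norm{v_i}^2$, with $\sum_i d_i=r$ and $0\le d_i\le 1$. The identity $Q^2=Q$ gives $\sum_{j} q_{ij}^2=d_i$ and $\sum_{i,j}q_{ij}^2=r$. For $v_i\ne 0$ put $u_i=v_i/\norm{v_i}\in S^{r-1}$ and $t_{ij}=\inner{u_i}{u_j}$, so that $q_{ij}=\sqrt{d_id_j}\,t_{ij}$. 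The diagonal contributes exactly $r$, so the task is to bound $\sum_{i\ne j}\sqrt{d_id_j}\,\abs{t_{ij}}$.

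The equality case guides the choice of inequality. It should be an equiangular tight frame with $n=\binom{r+1}{2}$ vectors, where $d_i=r/n$ and $\abs{t_{ij}}=1/\sqrt{r+2}$ for $i\ne j$. This is exactly where $\sum_{i,j}t_{ij}^4=3n^2/(r(r+2))$, so the $f_4^r$ inequality is tight there.

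\textbf{Step 1 (pointwise bound).} I will look for a polynomial $p(t)=a+bt^2-ct^4$ with $c\ge 0$ such that $\abs{t}\le p(t)$ on $[-1,1]$. I will take $p$ tangent to $\abs{t}$ at $t_0=1/\sqrt{r+2}$. It should also be chosen so that the value at $t=1$, which is used on the diagonal, is compatible with $\beta_r$.

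\textbf{Step 2 (summing with weights and applying positivity).} Summing with weights gives
\[
\sum_{i,j}\sqrt{d_id_j}\,\abs{t_{ij}}\le a\,W^2+b\sum_{i,j}\sqrt{d_id_j}\,t_{ij}^2-c\sum_{i,j}\sqrt{d_id_j}\,t_{ij}^4,
\qquad W:=\sum_i\sqrt{d_i}.
\]
By \Cref{lem:psd4} applied with the vector $(\sqrt{d_i})_i$, the last sum is at least $\frac{3}{r(r+2)}W^2$. This kills the quartic term in the right direction.

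\textbf{Step 3 (controlling the quadratic term).} This is the main obstacle, because the weights are mismatched. The quantity we control is $\sum_{i,j} d_id_j t_{ij}^2=r$, whereas the quadratic term carries weights $\sqrt{d_id_j}$. I would first try to handle this with Cauchy--Schwarz or AM--GM in the form
\[
\sqrt{d_id_j}\,t_{ij}^2\le \tfrac{\varepsilon}{2}\,(d_i+d_j)\,t_{ij}^2\cdot\tfrac{1}{\varepsilon\sqrt{d_id_j}},
\]
but that blows up when some $d_i$ is small.

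The alternative I would actually pursue is to change the normalization. Apply the whole argument with $u_i$ weighted by $x_i=\sqrt{d_i}$ in the positivity lemmas, and note that $\sum_j\sqrt{d_id_j}\,t_{ij}^2=\sum_j q_{ij}^2/\sqrt{d_id_j}$. Then bound row by row using $\sum_j q_{ij}^2=d_i$ together with the pointwise bound applied to $q_{ij}$ itself rather than to $t_{ij}$. Concretely, I would use $\abs{q}\le \alpha\, q^2+\gamma$-type estimates, optimized per entry, combined with $\sum_j q_{ij}^2=d_i$ and $\sum_{j}\abs{q_{ij}}\le\sqrt{n d_i}$ as a fallback. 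If needed, I would also use \Cref{lem:psd2} in its sharper Gram form: $\sum_{i,j}x_ix_j t_{ij}^2\ge \frac1r(\sum x_i)^2$, with $x_i=d_i$, gives $r\ge r^2/r$, which is tight for tight frames.

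\textbf{Step 4 (final optimization).} Finally, maximize the resulting expression in $W^2\le nr$ and $\sum d_i=r$. Choose the coefficients $a,b,c$ so that the bound reads $\beta_r n$ and is attained at the equiangular tight frame configuration. The last check is the algebra confirming that $\beta_r=\frac{r+\sqrt{r+2}}{1+\sqrt{r+2}}$ comes out of this optimization.
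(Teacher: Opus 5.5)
Your Steps 1--2 match the paper: weights $\sqrt{d_i}$, an even quartic majorant $p(t)=a+bt^2-ct^4$ of $|t|$, and \Cref{lem:psd4} giving $\sum_{i,j}\sqrt{d_id_j}\,t_{ij}^4\ge\frac{3}{r(r+2)}W^2$. The proof breaks at Step 3, which you leave open. Nothing in the proposal bounds $S:=\sum_{i,j}\sqrt{d_id_j}\,t_{ij}^2$, and Step 4 cannot be carried out without such a bound, because $S$ depends on the angles and not only on the $d_i$. The routes you suggest do not help. The row-by-row estimates ($|q|\le\alpha q^2+\gamma$, $\sum_j q_{ij}^2=d_i$, $\sum_j|q_{ij}|\le\sqrt{nd_i}$) use only second-moment information about the entries, and that information is consistent with $|q_{ij}|=\sqrt r/n$ for all $i,j$. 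So they are capped at $\sum_i\sqrt{nd_i}\le n\sqrt r$, the trivial bound, which is strictly larger than $\beta_r n$. Likewise, \Cref{lem:psd2} with $x_i=d_i$ yields only $r\ge r$.

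The missing step is short. Use plain AM--GM, $\sqrt{d_id_j}\le\frac12(d_i+d_j)$, with no $\varepsilon$, so nothing blows up. In place of the global identity $\sum_{i,j}d_id_jt_{ij}^2=r$, use the row identity. The condition $V^\top V=I_r$ reads $\sum_j d_j u_ju_j^\top=I_r$, so $\sum_j d_j t_{ij}^2=u_i^\top I_r u_i=1$ for every $i$; this is your $\sum_j q_{ij}^2=d_i$ divided by $d_i$, and you may take $u_i$ to be any unit vector when $d_i=0$. Hence $S\le\frac12\sum_{i,j}(d_i+d_j)t_{ij}^2=\sum_{i,j}d_jt_{ij}^2=n$, which is exactly the content of \Cref{lem:CS-ineq}. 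Then, using $W^2\le rn$, you get $\|Q\|_1\le\bigl(a-\frac{3c}{r(r+2)}\bigr)W^2+bS\le\bigl(a-\frac{3c}{r(r+2)}\bigr)rn+bn$. This requires $a-\frac{3c}{r(r+2)}\ge0$, a sign condition you never state; it is the paper's $b_r\le ra_r$.

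Step 1 is also underdetermined. Tangency at $1/s$, with $s=\sqrt{r+2}$, gives only two conditions; the third is $p(1)=1$. With all three, $p(t)-|t|=\frac{(1-|t|)(s|t|-1)^2(s|t|+s+2)}{2(s+1)^2}\ge0$, and the constant $r\bigl(a-\frac{3c}{r(r+2)}\bigr)+b$ works out to $\frac{s^2+s-2}{s+1}=\beta_r$.
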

\begin{proof}
Since $Q$ is a rank $r$ orthogonal projection, we may write $Q = BB^\top$ such that $B \in \R^{n\times r}$ where the columns of $B$ form an orthogonal set in $\R^n$. Further, let $x_1,\ldots,x_n \in \R^r$ be the rows of $B$. Write $x_i=c_i u_i$, where $c_i=\norm{x_i}_2\ge 0$ and $u_i\in S^{r-1}$ whenever $c_i\ne 0$; if $c_i=0$, choose $u_i$ arbitrarily in $S^{r-1}$. Since the columns of $B$ are orthogonal, $B^\top B = I_r$. Writing out $B^\top B$ in terms of the rows of $B$, we obtain
\begin{equation}\label{eq:projection-model}
\sum_{i=1}^n c_i^2 u_i u_i^\top=I_r 
\qquad\text{and}\qquad
q_{ij}=c_i c_j\inner{u_i}{u_j}
\end{equation}


Set
\[
C:=\sum_{i=1}^n c_i,
\qquad
x:=\sum_{i=1}^n c_i\phi(u_i),
\qquad
X:=\norm{x}_F.
\]

To prove \Cref{thm:l1-proj}, we establish \Cref{lem:CS-ineq}, \Cref{lem:abstract-majorant} and \Cref{lem:scalar-majorant}.

\begin{lemma}\label{lem:CS-ineq}
With the notation above,
\[
C^2+rX^2\le rn.
\]
\end{lemma}

\begin{proof}
\begin{align*}
C^2 + rX^2
&= \sum_{i=1}^n \sum_{j=1}^n c_i c_j + r \sum_{i=1}^n \sum_{j=1}^n c_i c_j \froinner{\phi(u_i)}{\phi(u_j)}\\
&= \sum_{i=1}^n \sum_{j=1}^n c_i c_j + r \sum_{i=1}^n \sum_{j=1}^n c_i c_j \left(\inner{u_i}{u_j}^2 - \frac{1}{r}\right) \text{\qquad by \Cref{lem:psd2}}\\
&= r \sum_{i=1}^n \sum_{j=1}^n c_i c_j \inner{u_i}{u_j}^2\\
&\leq \frac{r}{2}\sum_{i=1}^n \sum_{j=1}^n (c_i^2 + c_j^2) \inner{u_i}{u_j}^2 \text{\qquad by Cauchy-Schwarz}\\
&= r \sum_{i=1}^n \sum_{j=1}^n c_j^2 \inner{u_i}{u_j}^2 \text{\qquad by symmetry}
\end{align*}

\begin{align*}
r \sum_{i=1}^n \sum_{j=1}^n c_j^2 \inner{u_i}{u_j}^2
&= r \sum_{i=1}^n \sum_{j=1}^n \tr(u_i u_i^\top c_j^2 u_j u_j^\top)\\
&= r \sum_{i=1}^n  \tr\left(u_i u_i^\top \sum_{j=1}^n c_j^2 u_j u_j^\top\right)\\
&= r \sum_{i=1}^n  \tr\left(u_i u_i^\top\right) \text{\qquad by \Cref{eq:projection-model}}\\
&= r n
\end{align*}
\end{proof}

We will now bound \(\|Q\|_1\) by finding a polynomial $h(t)$ that satisfies $|t| \leq h(t)$ for all $t \in [-1,1]$ and applying it entry wise to $|q_{ij}| = c_i c_j |\inner{u_i}{u_j}|$ with $t = |\inner{u_i}{u_j}|$. We will consider an expression of the form
\[
a+b\,f_2^r(t)-\gamma\,f_4^r(t).
\]
This is because after substituting \(t=\inner{u_i}{u_j}\) and summing over \(i,j\), the constant term produces $C^2$, the \(f_2^r\)-term produces \(X^2\), while the \(f_4^r\)-term is nonnegative since $f_4^r$ is positive semidefinite by \Cref{lem:psd4}. The following lemma formalizes this reduction.

\begin{lemma}\label{lem:abstract-majorant}
Let \(a,b,\gamma\in\R\) satisfy
\[
\abs{t}\le a+b\,f_2^r(t)-\gamma\,f_4^r(t)
\qquad\text{for all } t\in[-1,1],
\]
with \(\gamma\ge 0\) and \(b\le ra\). Then every rank-\(r\) orthogonal projection \(Q\in\R^{n\times n}\) satisfies
\[
\|Q\|_1\le arn.
\]
\end{lemma}

\begin{proof}
Recall by \eqref{eq:projection-model} that we may write $Q = BB^T$ where the rows of $B$ are given by $c_i u_i$ for $i \in [n]$ and $\|Q\|_1 = \sum_{i,j} |q_{ij}| = \sum_{i,j} c_i c_j |\inner{u_i}{u_j}|$. Applying the assumed scalar inequality with \(t=\inner{u_i}{u_j}\), using the fact that $c_i \geq 0$ and summing over all \(i,j\), we obtain
\[
\|Q\|_1 = \sum_{i,j} |q_j|
\le a\sum_{i,j} c_i c_j
+b\sum_{i,j} c_i c_j f_2^r(\inner{u_i}{u_j})
-\gamma\sum_{i,j} c_i c_j f_4^r(\inner{u_i}{u_j}).
\]
By definition,
\[
\sum_{i,j} c_i c_j=C^2,
\qquad
\sum_{i,j} c_i c_j f_2^r(\inner{u_i}{u_j}) = \sum_{i,j} c_i c_j \froinner{\phi(u_i)}{\phi(u_j)} = X^2
\]
Also, the positive semidefiniteness of \(\bigl(f_4^r(\inner{u_i}{u_j})\bigr)_{i,j=1}^n\) implies that
\[
\sum_{i,j} c_i c_j f_4^r(\inner{u_i}{u_j})\ge 0.
\]
Therefore
\[
\|Q\|_1\le aC^2+bX^2.
\]
Since \(b\le ra\), we have
\[
aC^2+bX^2\le a\bigl(C^2+rX^2\bigr).
\]
Now apply \Cref{lem:CS-ineq} to get
\[
\|Q\|_1\le arn.
\]
\end{proof}

We now exhibit a concrete choice of coefficients for which the hypotheses of \Cref{lem:abstract-majorant} hold. Clearly, we would like $ar$ to be as small as possible. The choice of coefficients in \Cref{lem:scalar-majorant} below may seem mysterious but it is motivated by a connection to equiangular lines. A more detailed discussion is included in \Cref{sec5}. 


\begin{lemma}\label{lem:scalar-majorant}
Let \(r\ge 2\), and set
\[
s:=\sqrt{r+2}.
\]
Define
\[
a_r:=\frac{s^2+s-2}{r(s+1)},\qquad
b_r:=\frac{s(s^2+2s+3)}{2(s+1)^2},\qquad
\gamma_r:=\frac{s^3}{2(s+1)^2}.
\]
Then for every \(t\in[-1,1]\),
\[
|t|\le a_r+b_r f_2^r(t)-\gamma_r f_4^r(t).
\]
Moreover,
\[
b_r\le ra_r.
\]
\end{lemma}

\begin{proof}
Recall that
\[
f_2^r(t)=t^2-\frac1r,\qquad
f_4^r(t)=t^4-\frac{3}{r(r+2)}.
\]
Since the left-hand side is even in \(t\), it suffices to consider \(t\in[0,1]\).
A direct expansion gives
\[
a_r+b_r f_2^r(t)-\gamma_r f_4^r(t)-t
=
\frac{(1-t)(st-1)^2(st+s+2)}{2(s+1)^2}.
\]
The right-hand side is nonnegative for \(t\in[0,1]\), since
\[
1-t\ge 0,\qquad (st-1)^2\ge 0,\qquad st+s+2>0.
\]
Hence
\[
|t|\le a_r+b_r f_2^r(t)-\gamma_r f_4^r(t)
\qquad\text{for all } t\in[-1,1].
\]

Finally,
\[
ra_r-b_r
=
\frac{s^3+2s^2-5s-4}{2(s+1)^2}
=
\frac{(s-2)(s+1)(s+3)+2}{2(s+1)^2}\ge 0,
\]
and therefore \(b_r\le ra_r\).
\end{proof}

\noindent\textit{Completing the proof of \cref{thm:l1-proj}}.

Apply \Cref{lem:abstract-majorant} with
\[
a=a_r,\qquad b=b_r,\qquad \gamma=\gamma_r.
\]
By \Cref{lem:scalar-majorant}, the hypotheses of \Cref{lem:abstract-majorant} are satisfied. Therefore
\[
\|Q\|_1\le a_r r n = \frac{s^2 + s -2}{s+1} n = \frac{r+\sqrt{r+2}}{1+\sqrt{r+2}} n = \beta_r n.
\]

\end{proof}

\section{From projections to eigenvalues}

\begin{theorem}\label{thm:bottom-r}
Let $A=(a_{ij})\in \R^{n\times n}$ be symmetric, with eigenvalues
\[
\mu_1\ge \mu_2\ge \cdots\ge \mu_n.
\]
Assume that
\[
0\le a_{ij}\le 1 \quad (i\ne j),
\qquad
 a_{ii}\ge 0 \quad (1\le i\le n).
\]
Let $r\ge 2$ be an integer. Then
\[
\mu_{n-r+1}+\cdots+\mu_n\ge -\frac{\beta_r}{2}\,n.
\]
In particular,
\[
\mu_{n-r+1}\ge -\frac{\beta_r}{2r}\,n.
\]
\end{theorem}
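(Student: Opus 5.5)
Let $Q$ be the orthogonal projection onto the span of eigenvectors for $\mu_{n-r+1},\dots,\mu_n$. This is a rank-$r$ orthogonal projection. By the variational (Ky Fan) characterization,
\[
\mu_{n-r+1}+\cdots+\mu_n=\tr(AQ)=\froinner{A}{Q}=\sum_{i,j}a_{ij}q_{ij}.
\]
The plan is to bound this sum from below entrywise, using the sign constraints on $A$ together with \Cref{thm:l1-proj}.

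\textbf{Step 1 (diagonal terms).} The diagonal of $Q$ is nonnegative, since $q_{ii}=c_i^2\ge 0$. Because $a_{ii}\ge 0$, we get $\sum_i a_{ii}q_{ii}\ge 0$, so these terms can be dropped.

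\textbf{Step 2 (off-diagonal terms).} For $i\ne j$ we have $0\le a_{ij}\le 1$, hence $a_{ij}q_{ij}\ge \min(0,q_{ij})=-q_{ij}^-$. This gives
\[
\sum_{i,j}a_{ij}q_{ij}\ge -\sum_{i\ne j}q_{ij}^-.
\]

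\textbf{Step 3 (negative part versus $\ell_1$ norm).} Here we need $\sum_{i\ne j}q_{ij}^-\le \|Q\|_1/2$. The key identity is $\sum_{i,j}q_{ij}=\1^\top Q\1\ge 0$, since $Q$ is PSD. Writing $P=\sum_{i,j}q_{ij}^+$ and $N=\sum_{i,j}q_{ij}^-$, this says $P\ge N$. Combined with $P+N=\|Q\|_1$, it yields $N\le \|Q\|_1/2$. Every negative entry is off-diagonal, because the diagonal of $Q$ is nonnegative. Therefore $\sum_{i\ne j}q_{ij}^-=N$.

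\textbf{Step 4 (conclusion).} Applying \Cref{thm:l1-proj} gives
\[
\mu_{n-r+1}+\cdots+\mu_n\ge -\tfrac12\|Q\|_1\ge -\tfrac{\beta_r}{2}n.
\]
For the second claim, $\mu_{n-r+1}$ is the largest of the $r$ bottom eigenvalues, so it is at least their average. This gives $\mu_{n-r+1}\ge -\beta_r n/(2r)$.

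I expect no real obstacle. The only nontrivial trick is Step 3, where PSD-ness of $Q$ through $\1^\top Q\1\ge0$ halves the $\ell_1$ bound. One point to double-check is that the Ky Fan minimum is attained by exactly this $Q$.
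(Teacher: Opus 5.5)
Your proof is correct and is essentially the paper's argument. The paper also drops the diagonal via $a_{ii}q_{ii}\ge 0$, bounds $a_{ij}q_{ij}\ge \min(q_{ij},0)$, and uses $\1^\top Q\1\ge 0$ (together with $\tr Q=r$) to show that the negative off-diagonal mass is at most $\|Q\|_1/2$ before invoking \Cref{thm:l1-proj}; your $P\ge N$ bookkeeping is a slightly cleaner form of that step. The only cosmetic difference is that the paper bounds $\tr(AQ)$ for every $Q\in\cP_r$ and then takes the Ky Fan minimum, whereas you use the spectral projection directly; for that projection $\tr(AQ)=\sum_{k>n-r}\mu_k$ is an immediate computation, so the point you flagged needs no further checking.
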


\begin{proof}
Let $\cP_r$ denote the set of rank-$r$ orthogonal projections in $\R^{n\times n}$. By the variation characterization of eigenvalues (Ky Fan's principle), it is easy to see that \[\lambda_1(M) + \cdots + \lambda_r(M) = \max_{B \in \R^{n\times r}: B^T B =  I_r} \tr(B^T M B) = \max_{B \in \R^{n\times r}: B^T B =  I_r} \tr(M BB^T) = \max_{Q \in \mathcal{P}_r} \tr(M Q)\] We simply apply this to $M = -A$ to obtain: 
\[
\mu_{n-r+1}+\cdots+\mu_n=
\min_{Q\in\cP_r} \tr(AQ).
\]

Fix $Q=(q_{ij})\in \cP_r$. Since $Q$ is positive semidefinite, we have $\1^\top Q\1\ge 0$. Further, $\tr(Q)= \tr(BB^T) = \tr(B^TB) = r$. This is also evident because $Q$ is a rank $r$ orthogonal projection and so has $r$ eigenvectors with eigenvalue $1$.
\[
\sum_{i<j} q_{ij}=\frac{\1^\top Q\1-r}{2}\ge -\frac r2.
\]
Also,
\[
\sum_{i<j}\abs{q_{ij}}=\frac{\|Q\|_1-r}{2}.
\]
Hence
\begin{align*}
\sum_{i<j}\min(q_{ij},0)
&=\frac12\left(\sum_{i<j}q_{ij}-\sum_{i<j}\abs{q_{ij}}\right) \\
&\ge \frac12\left(-\frac r2-\frac{\|Q\|_1-r}{2}\right)
 =-\frac{\|Q\|_1}{4}
 \ge -\frac{\beta_r}{4}n,
\end{align*}
where the last step uses \Cref{thm:l1-proj}.

Now $a_{ij}q_{ij}\ge \min(q_{ij},0)$ for every $i<j$, because $0\le a_{ij}\le 1$, and $a_{ii}q_{ii}\ge 0$ because both factors are nonnegative. Therefore
\begin{align*}
\tr(AQ)
&=\sum_i a_{ii}q_{ii}+2\sum_{i<j} a_{ij}q_{ij} \\
&\ge 2\sum_{i<j}\min(q_{ij},0)
 \ge -\frac{\beta_r}{2}n.
\end{align*}
Taking the minimum over $Q\in \cP_r$ proves the first claim.

Since $\mu_{n-r+1}\ge \mu_{n-r+2}\ge \cdots\ge \mu_n$, we have
\[
r\,\mu_{n-r+1}\ge \mu_{n-r+1}+\cdots+\mu_n,
\]
so
\[
\mu_{n-r+1}\ge -\frac{\beta_r}{2r}n.
\]
\end{proof}

\begin{proof}[Proof of \Cref{thm:main-general}]
The case $k=2$ is the classical sharp bound $\lambda_2(G)\le n/2-1$~\cite{Hong88,Nikiforov15}. So assume $k\ge 3$, and set
\[
r:=k-1.
\]
Let $\overline G$ be the complement of $G$. Then
\[
A(G)+A(\overline G)=J-I.
\]
The eigenvalues of $J-I$ are $n-1,-1,\dots,-1$, so in particular
\[
\lambda_2(J-I)=-1.
\]
We use Weyl's inequality in the form
\[
\lambda_i(X)+\lambda_j(Y)\le \lambda_{i+j-n}(X+Y)
\qquad\text{whenever } i+j\ge n+1.
\]
Apply this with
\[
X=A(G),
\qquad
Y=A(\overline G),
\qquad
i=k,
\qquad
j=n-k+2.
\]
Since $i+j=n+2$, we obtain
\[
\lambda_k(G)+\lambda_{n-k+2}(\overline G)\le \lambda_2(J-I)=-1.
\]
Now apply \Cref{thm:bottom-r} with $r=k-1$ to $A(\overline G)$ to get
\[
\lambda_{n-k+2}(\overline G)\ge -\frac{\beta_{k-1}}{2(k-1)}n.
\]
Since
\[
\frac{\beta_{k-1}}{2(k-1)}
=\frac{k-1+\sqrt{k+1}}{2(k-1)(\sqrt{k+1}+1)}
= \frac{(k-2)\sqrt{k+1}+2}{2k(k-1)}
=\alpha_k,
\]
it follows that
\[
\lambda_{n-k+2}(\overline G)\ge -\alpha_k n
\]
Therefore
\[
\lambda_k(G)\le \alpha_k n - 1
\]
which is the desired bound.
\end{proof}

\section{Concluding Remarks}\label{sec5}
\subsection{Tight Lower Bounds from Equiangular Lines}\label{disc:lowerbounds}
A set of lines in $\R^r$ passing through the origin is said to be equiangular if every pair of distinct lines determine the same angle. Let $N$ be the maximum number of equiangular lines in $\R^r$. Gerzon proved that $N \leq \binom{r+1}{2}$(see \cite{lemmens1973equiangular}). We call a set of equiangular lines extremal if $N = \binom{r+1}{2}$. We remark that the argument below is not new, we simply apply it to our problem. 

Fix $r \geq 2$. Suppose there exists a system of $N = \binom{r+1}{2}$ equiangular lines in $\R^r$. Let $k=r+1$. Then there exist a graph on $n = 2N$ vertices such that $\alpha_k(G) = \alpha_k n - 1$. 

Let $u_1,\ldots,u_N$ be the representative unit vectors of the equiangular lines. Then $\inner{u_i}{u_j} \in \{-\alpha,\alpha\}$ for some $\alpha \in (0,1)$. \cite{lemmens1973equiangular} proved that if $N = \binom{r+1}{2}$, then $\alpha = \frac{1}{\sqrt{r+2}}$. Using this, we can construct a graph $G$ that satisfies $\lambda_k(G) = \alpha_k n - 1$. This construction goes back to Seidel’s theory of regular two-graphs\cite{Seidel1976SurveyTwoGraphs}.

Let $V(G) = \{(i,+),(i,-): i \in [N]\}$. So $n = |V(G)| = 2n$. For $i \neq j$, if $\inner{x_i}{x_j} = \alpha$, put edges between $(i,+)$ and $(j,+)$ and between $(i,-)$ and $(j,-)$. For $i \neq j$, if $\inner{x_i}{x_j} = -\alpha$, then put edges between $(i,+)$ and $(j,-)$ and between $(i,-)$ and $(j,+)$. The spectrum of such a graph is well understood (see \cite{GodsilHensel1992}) and in particular, the $r+1 = k\text{th}$ eigenvalue is $\frac{N-r}{\alpha r}$. Substituting $n = 2N$, $\alpha = \frac{1}{\sqrt{r+2}}$ and $r = k-1$ yields $\alpha_k n - 1$. 

So our upper bound is tight for every $k$ when an extremal equiangular lines construction exists for $r = k-1$. However, such a construction is only known to exist for $r\in \{1,2,3,7,23\}$. Therefore, we only have tight results for $k \in \{1,2,3,4,8,24\}$. This is also reminiscent of the sphere packing problem where tight results are known for $d \in \{1,2,3,8,24\}$. This is not a coincidence: it is known that the $E_8$ lattice is optimal for $d = 8$\cite{viazovska2017sphere} and the Leech lattice is optimal for $d=24$\cite{cohn2017sphere}. The extremal equiangular lines constructions for $r=7$ and $23$ are also derived from these lattices\cite{gillespie2018equiangular}.

However, there may be graphs that match our upper bound for $\lambda_k(G)$ that do not arise from extremal equiangular constructions. It is also possible that our bound can be improved when an extremal equiangular construction does not exist. 

\subsection{Multiplicity of Second Eigenvalue}
For a graph $G$, let $m$ be the multiplicity of $\lambda_2(G)$. Therefore, $\lambda_2 = \lambda_{m+1} \leq \alpha_{m+1} n - 1$. 
Equivalently,
\[
m \le \max\bigl\{i\ge 1:\ \lambda_2(G)+1\le \alpha_{i+1}n\bigr\}.
\]
So our result gives an implicit upper bound on the second-eigenvalue multiplicity in terms of \(\lambda_2(G)\) and \(n\). By the same graph construction as in \Cref{disc:lowerbounds}, this is sharp whenever $m+1 \in \{2,3,4,8,24\}$. Since $\frac{1}{\alpha_{m+1}} = \Theta(\sqrt{m})$, we obtain $m \leq O((\frac{n}{\lambda_2 + 1})^2)$. The asymptotic result already follows from \cite{Nikiforov15} but we actually get sharper bounds. 

\cite{balla2024equiangular} show that $m$ is bounded by roughly $\frac{n}{\lambda_2 + 1}$ for graphs with maximum degree $\Delta$ where $\Delta^{O(1)} \leq \log n$ in order to obtain an exact bound on the number of equiangular lines in $\R^r$ with a fixed angle $\arccos{1/(2k-1)}$ for $r$ at least exponential in a polynomial of $k$, improving on the breakthrough result in \cite{jiang2021equiangular}. The bound in \cite{balla2024equiangular} focuses on the sparse graph regime and gets progressively weaker as $\Delta$ becomes polynomial while our bound remains meaningful even for much denser graphs. 

It would be interesting to obtain bounds on $\lambda_k(G)$ for graphs that are sparse or have bounded degree. Here is a simple bound. If $G$ has average degree $d$ and number of edges $e = nd/2$, then $\tr(A(G)^2) = 2e = \sum_{i=1}^n \lambda_i^2$. So $k \lambda_k^2 \leq 2e = nd$, so $\lambda_k \leq \sqrt{nd/k}$. Obtaining better bounds would be interesting for various regimes of $d$.

\subsection{Improving the Upper Bound}
Extremal constructions for the equiangular lines problem with $N = \binom{r+1}{2}$ can only exist when $r=2,3$ or $r+2$ is an odd perfect square\cite{lemmens1973equiangular}. Even when $r+2$ is an odd perfect square, the question of existence remains wide open. (We do know that such constructions do not exist for infinitely many $r$ with $r+2$ an odd perfect square \cite{NebeVenkov2013}\cite{BannaiMunemasaVenkov2005}). Therefore, obtaining a general improvement on our upper bound even by an additive constant for general $k = r+1$ is a difficult problem because it would immediately disprove the existence of extremal equiangular lines constructions in $\R^r$. Improvements for $r$ where $r+2$ is not an odd perfect square are likely more feasible. It would be interesting to study the very concrete problem of finding a tight upper bound for $\lambda_5(G)$ to understand the landscape better. 

\subsection{Motivation for Choice of Polynomial}\label{motivation}
In \Cref{lem:scalar-majorant}, we we find coefficients $a_r,b_r,\gamma_r$ such that $H(t) = a_r+b_r f_2^r(t)-\gamma_r f_4^r(t)-t \geq 0$ for $t \in [-1,1]$ with $r a_r$ as small as possible. It is reasonable to try and find $H$ such that $H(1) = 0$ because if our bound is not tight at the endpoint, we could potentially improve the bound further. Next, for the extremal equiangular constructions with $N = \binom{r+1}{2}$, the common angle is $\alpha = \frac{1}{\sqrt{r+2}} = \frac{1}{s}$. Since we apply the polynomial to bound terms of the form $|t| = |\inner{u_i}{u_j}|$ which is the common angle between two unit vectors,  a reasonable heuristic is to find $H$ such that $H(1/s) = 0$. Since $1/s$ is in the interior of $[0,1]$, we also set $H'(1/s) = 0$. This turns out to uniquely define $H$. \\

\textbf{Acknowledgments}:
We thank Noga Alon, Matija Bucić, Sergio Cristancho, Julien Codsi, Alex Divoux, Andrew Lin and Shouda Wang for helpful discussions. This paper was written with the help of ChatGPT 5.4.

\providecommand{\MR}[1]{}
\providecommand{\MRhref}[2]{%
  \href{http://www.ams.org/mathscinet-getitem?mr=#1}{#2}
}

   \bibliography{ref}

@article{BachocVallentin08,
  author  = {C. Bachoc and F. Vallentin},
  title   = {New upper bounds for kissing numbers from semidefinite programming},
  journal = {Journal of the American Mathematical Society},
  volume  = {21},
  number  = {3},
  pages   = {909--924},
  year    = {2008},
  doi     = {10.1090/S0894-0347-07-00589-9}
}

@article{DGS77,
  author  = {P. Delsarte and J. M. Goethals and J. J. Seidel},
  title   = {Spherical codes and designs},
  journal = {Geometriae Dedicata},
  volume  = {6},
  number  = {3},
  pages   = {363--388},
  year    = {1977},
  doi     = {10.1007/BF03187604}
}

@article{Hong88,
  author  = {Y. Hong},
  title   = {Bounds of eigenvalues of a graph},
  journal = {Acta Mathematicae Applicatae Sinica},
  volume  = {4},
  number  = {2},
  pages   = {165--168},
  year    = {1988},
  doi     = {10.1007/BF02006065}
}

@article{Hong93,
  author  = {Y. Hong},
  title   = {Bounds of eigenvalues of graphs},
  journal = {Discrete Mathematics},
  volume  = {123},
  number  = {1--3},
  pages   = {65--74},
  year    = {1993},
  doi     = {10.1016/0012-365X(93)90007-G}
}

@article{LeonidaLi25,
  author  = {G. Leonida and S. Li},
  title   = {On graphs with large third eigenvalue},
  journal = {arXiv preprint arXiv:2501.02563},
  year    = {2025},
  doi     = {10.48550/arXiv.2501.02563},
  eprint  = {2501.02563},
  archivePrefix = {arXiv},
  primaryClass  = {math.CO},
  url     = {https://arxiv.org/abs/2501.02563}
}

@article{Li25,
  author  = {S. Li},
  title   = {Strengthened upper bound on the third eigenvalue of graphs},
  journal = {arXiv preprint arXiv:2501.07494},
  year    = {2025},
  doi     = {10.48550/arXiv.2501.07494},
  eprint  = {2501.07494},
  archivePrefix = {arXiv},
  primaryClass  = {math.CO},
  url     = {https://arxiv.org/abs/2501.07494}
}

@article{Linz23,
  author  = {W. Linz},
  title   = {Improved Lower Bounds on the Extrema of Eigenvalues of Graphs},
  journal = {Graphs and Combinatorics},
  volume  = {39},
  number  = {4},
  pages   = {82},
  year    = {2023},
  doi     = {10.1007/s00373-023-02678-0}
}

@article{LiuNing23,
  author  = {L. Liu and B. Ning},
  title   = {Unsolved problems in spectral graph theory},
  journal = {Operations Research Transactions},
  volume  = {27},
  number  = {4},
  pages   = {33--60},
  year    = {2023},
  doi     = {10.15960/j.cnki.issn.1007-6093.2023.04.003}
}

@article{Nikiforov15,
  author  = {V. Nikiforov},
  title   = {Extrema of graph eigenvalues},
  journal = {Linear Algebra and its Applications},
  volume  = {482},
  pages   = {158--190},
  year    = {2015},
  doi     = {10.1016/j.laa.2015.05.016}
}

@article{Powers89,
  author  = {D. L. Powers},
  title   = {Bounds on graph eigenvalues},
  journal = {Linear Algebra and its Applications},
  volume  = {117},
  pages   = {1--6},
  year    = {1989},
  doi     = {10.1016/0024-3795(89)90541-7}
}

@article{Schoenberg42,
  author  = {I. J. Schoenberg},
  title   = {Positive definite functions on spheres},
  journal = {Duke Mathematical Journal},
  volume  = {9},
  number  = {1},
  pages   = {96--108},
  year    = {1942},
  doi     = {10.1215/S0012-7094-42-00908-6}
}

@article{Tang26,
  author  = {Q. Tang},
  title   = {A sharp upper bound on the third adjacency eigenvalue of a graph},
  journal = {arXiv preprint arXiv:2603.21181},
  year    = {2026},
  doi     = {10.48550/arXiv.2603.21181},
  eprint  = {2603.21181},
  archivePrefix = {arXiv},
  primaryClass  = {math.CO},
  url     = {https://arxiv.org/abs/2603.21181}
}

@article{alon2000efficient,
  title={Efficient testing of large graphs},
  author={N. Alon and E. Fischer and M. Krivelevich and M. Szegedy},
  journal={Combinatorica},
  volume={20},
  number={4},
  pages={451--476},
  year={2000},
  publisher={Springer}
}

@article{lemmens1973equiangular,
  title={Equiangular lines},
  author={P. W. H. Lemmens and J. J. Seidel},
  journal={Journal of Algebra},
  volume={24},
  number={3},
  pages={494--512},
  year={1973},
  publisher={Elsevier}
}

@inproceedings{Seidel1976SurveyTwoGraphs,
  author    = {J. J. Seidel},
  title     = {A Survey of Two-Graphs},
  booktitle = {Colloquio Internazionale sulle Teorie Combinatorie (Rome, 1973), Tomo I},
  series    = {Atti dei Convegni Lincei},
  volume    = {17},
  pages     = {481--511},
  year      = {1976},
  publisher = {Accademia Nazionale dei Lincei},
  address   = {Rome}
}

@article{GodsilHensel1992,
  author  = {C. D. Godsil and A. D. Hensel},
  title   = {Distance regular covers of the complete graph},
  journal = {Journal of Combinatorial Theory, Series B},
  volume  = {56},
  number  = {2},
  pages   = {205--238},
  year    = {1992},
  doi     = {10.1016/0095-8956(92)90019-T}
}

@article{balla2024equiangular,
  title={Equiangular lines via improved eigenvalue multiplicity},
  author={I. Balla and M. Buci{\'c}},
  journal={arXiv preprint arXiv:2409.16219},
  year={2024}
}

@article{jiang2021equiangular,
  title={Equiangular lines with a fixed angle},
  author={Z. Jiang and J. Tidor and Y. Yao and S. Zhang and Y. Zhao},
  journal={Annals of Mathematics},
  volume={194},
  number={3},
  pages={729--743},
  year={2021},
  publisher={Department of Mathematics, Princeton University Princeton, New Jersey, USA}
}

@article{viazovska2017sphere,
  title={The sphere packing problem in dimension 8},
  author={M. Viazovska},
  journal={Annals of mathematics},
  pages={991--1015},
  year={2017},
  publisher={JSTOR}
}

@article{cohn2017sphere,
  title={The sphere packing problem in dimension 24},
  author={H. Cohn and A. Kumar and S. Miller and D. Radchenko and M. Viazovska},
  journal={Annals of mathematics},
  volume={185},
  number={3},
  pages={1017--1033},
  year={2017},
  publisher={Department of Mathematics, Princeton University Princeton, New Jersey, USA}
}

@article{gillespie2018equiangular,
  title={Equiangular lines, incoherent sets and quasi-symmetric designs},
  author={N. Gillespie},
  journal={arXiv preprint arXiv:1809.05739},
  year={2018}
}

@article{MagsinoMixonParshall2019,
  author     = {M. Magsino and D. Mixon and H. Parshall},
  title      = {A Delsarte-Style Proof of the Bukh-Cox Bound},
  journal    = {CoRR},
  volume     = {abs/1902.00926},
  year       = {2019},
  eprinttype = {arXiv},
  eprint     = {1902.00926},
  url        = {http://arxiv.org/abs/1902.00926}
}

@book{stein1971introduction,
  title={Introduction to Fourier analysis on Euclidean spaces},
  author={E. Stein and G. Weiss},
  volume={1},
  year={1971},
  publisher={Princeton university press}
}

@article{BannaiMunemasaVenkov2005,
  author  = {E. Bannai and A. Munemasa and B. Venkov},
  title   = {The nonexistence of certain tight spherical designs},
  journal = {St. Petersburg Mathematical Journal},
  volume  = {16},
  number  = {4},
  pages   = {609--625},
  year    = {2005},
  doi     = {10.1090/S1061-0022-05-00868-X}
}

@article{NebeVenkov2013,
  author  = {G. Nebe and B. Venkov},
  title   = {On tight spherical designs},
  journal = {St. Petersburg Mathematical Journal},
  volume  = {24},
  number  = {3},
  pages   = {485--491},
  year    = {2013},
  doi     = {10.1090/S1061-0022-2013-01249-0}
}
   \bibliographystyle{amsalpha}

\pagebreak

\section*{Appendix}

We prove \Cref{lem:psd4} here. We will make use of $4$-tensors. A $4$-tensor $T=(T_{abcd})_{a,b,c,d=1}^r$ is simply an array of real numbers with four indices.  
If $T=(T_{abcd})$ and $S=(S_{abcd})$ are two $4$-tensors of the same size, their Frobenius inner product is defined by
\[
\froinner{T}{S}:=\sum_{a,b,c,d=1}^r T_{abcd}S_{abcd}.
\]
This is the natural analogue of the usual Frobenius inner product for matrices.  
For a vector $u=(u_1,\dots,u_r)\in\mathbb R^r$, we write $u^{\otimes 4}$ for the $4$-tensor with entries
\[
(u^{\otimes 4})_{ijkl}:=u_i u_j u_k u_l.
\]
A $4$-tensor is called symmetric if its entries do not change when the indices are permuted.

\subsection*{\Cref{lem:psd4}} For $r \geq 2$,
\[
f_4^r(t)=t^4-\frac{3}{r(r+2)}
\]
is positive semidefinite on \(S^{r-1}\).

\begin{proof}
Let \(\Omega=(\Omega_{ijkl})_{1\le i,j,k,l\le r}\) be the symmetric \(4\)-tensor
\[
\Omega_{ijkl}:=\frac{1}{r(r+2)}
\bigl(\delta_{ij}\delta_{kl}+\delta_{ik}\delta_{jl}+\delta_{il}\delta_{jk}\bigr),
\]
where \(\delta_{ab}\) is the Kronecker delta, that is,
\[
\delta_{ab}=
\begin{cases}
1,& a=b,\\
0,& a\neq b.
\end{cases}
\]
For \(u\in S^{r-1}\), define
\[
\psi(u):=u^{\otimes 4}-\Omega.
\]
We claim that for all \(u,v\in S^{r-1}\),
\[
\froinner{\psi(u)}{\psi(v)}=f_4^r(\inner{u}{v}).
\]
Write \(t=\inner{u}{v}\). We compute each term separately.

First,
\[
\froinner{u^{\otimes 4}}{v^{\otimes 4}}
=\sum_{i,j,k,l} u_i u_j u_k u_l\, v_i v_j v_k v_l
=(u^\top v)^4=t^4.
\]

Next,
\begin{align*}
\froinner{u^{\otimes 4}}{\Omega}
&=\frac{1}{r(r+2)}
\sum_{i,j,k,l}u_i u_j u_k u_l
\bigl(\delta_{ij}\delta_{kl}+\delta_{ik}\delta_{jl}+\delta_{il}\delta_{jk}\bigr).
\end{align*}
Each of the three terms contributes
\[
\sum_{i,j,k,l}u_i u_j u_k u_l\,\delta_{ij}\delta_{kl}
=\sum_{i,k}u_i^2u_k^2
=\Bigl(\sum_i u_i^2\Bigr)^2
=1,
\]
so
\[
\froinner{u^{\otimes 4}}{\Omega}=\frac{3}{r(r+2)}.
\]
Similarly,
\[
\froinner{v^{\otimes 4}}{\Omega}=\frac{3}{r(r+2)}.
\]

It remains to compute \(\froinner{\Omega}{\Omega}\). Let
\[
C_{ijkl}:=\delta_{ij}\delta_{kl}+\delta_{ik}\delta_{jl}+\delta_{il}\delta_{jk},
\]
so that \(\Omega=\frac{1}{r(r+2)}C\). Then
\[
\froinner{C}{C}
=\sum_{i,j,k,l} C_{ijkl}^2.
\]
Expanding \(C_{ijkl}^2\), the three square terms contribute
\[
\sum_{i,j,k,l}(\delta_{ij}\delta_{kl})^2
=\sum_{i,j,k,l}\delta_{ij}\delta_{kl}
=r^2,
\]
and likewise for the other two square terms, giving \(3r^2\) in total.

For the mixed terms,
\[
\sum_{i,j,k,l}\delta_{ij}\delta_{kl}\delta_{ik}\delta_{jl}=r,
\]
since all four indices must be equal. The same holds for each of the other five mixed terms. Hence
\[
\froinner{C}{C}=3r^2+6r=3r(r+2),
\]
and therefore
\[
\froinner{\Omega}{\Omega}
=\frac{1}{r^2(r+2)^2}\,\froinner{C}{C}
=\frac{3}{r(r+2)}.
\]

Putting everything together,
\begin{align*}
\froinner{\psi(u)}{\psi(v)}
&=\froinner{u^{\otimes 4}-\Omega}{v^{\otimes 4}-\Omega} \\
&=\froinner{u^{\otimes 4}}{v^{\otimes 4}}
-\froinner{u^{\otimes 4}}{\Omega}
-\froinner{v^{\otimes 4}}{\Omega}
+\froinner{\Omega}{\Omega} \\
&=t^4-\frac{3}{r(r+2)}-\frac{3}{r(r+2)}+\frac{3}{r(r+2)} \\
&=t^4-\frac{3}{r(r+2)} \\
&=f_4^r(t).
\end{align*}

Thus, for any \(u_1,\dots,u_n\in S^{r-1}\), the matrix
\[
\bigl(f_4^r(\inner{u_i}{u_j})\bigr)_{i,j=1}^n
=
\bigl(\froinner{\psi(u_i)}{\psi(u_j)}\bigr)_{i,j=1}^n
\]
is a Gram matrix, hence is positive semidefinite.
\end{proof}


\end{document}